\newtheorem{thm}{Theorem}
\newtheorem{theorem}{Theorem}[section]
\newtheorem{cor}[theorem]{Corollary}
\newtheorem{lemma}[theorem]{Lemma}
\def\irr#1{{\rm Irr}(#1)}
\def\irrr#1#2 {\irr {#1 \mid #2}}
\title{A central series associated with $V(G)$}
\author {Nabil M.\ Mlaiki
   \\ {\it Department of Mathematical Sciences, Kent State University}
   \\ {\it Kent, Ohio 44242}
   \\ E-mail: nmlaiki@math.kent.edu
      }
\date{}
\begin{document}

\maketitle{}

\begin{abstract}
We generalize  Lewis's result about a central series associated with
the vanishing off subgroup. We write $V_{1}=V(G)$ for the vanishing
off subgroup of $G$,  and $V_{i}=[V_{i-1},G]$ for the terms in this
central series. Lewis proved that there exists a positive integer
$n$ such that if $V_{3} < G_{3}$, then
$|G:V_{1}|=|G':V_{2}|^{2}=p^{2n}$. Let $D_{3}/V_{3} =
C_{G/V_{3}}(G'/V_{3})$. He also showed  that if $V_{3} < G_{3}$,
then either $|G:D_{3}|=p^{n}$ or $D_{3}=V_{1}$. We show that if
$V_{i} <G_{i}$ for $i\ge 4,$ where $G_{i}$ is the $i$-th term in the
lower central series of $G$, then $|G_{i-1}:V_{i-1}|=|G:D_{3}|.$

\end{abstract}

\section{Introduction}

Throughout this paper, $G$ is a finite group. We write
${\rm{Irr}}(G)$ for the set of irreducible characters of $G$ and
${\rm{nl}}(G)=\{\chi\in {\rm{Irr}}(G) \mid \chi(1)\neq 1\}.$ Define
the vanishing off subgroup of $G$, denoted by $V(G) ,$ by
$V(G)=\langle g\in G \mid$ there exists $\chi \in$ nl$(G) $ such
that $ \chi(g) \neq 0 \rangle.$ This subgroup was first introduced
by Lewis in \cite{Lewis1}. Note that $V(G)$ is the smallest subgroup
of $G$ such that all nonlinear irreducible characters vanish on
$G\setminus V(G).$ Consider the term $G_{i}$ as the $i$-th term in
the lower central series, which is defined by $G_{1}=G,$
$G_{2}=G'=[G,G],$ and $G_{i}=[G_{i-1},G]$ for $i\ge 3.$ We are going
to study a central series associated with the vanishing off
subgroup, defined inductively by $V_{1}=V(G)$, and $V_{i}=[ V_{i-1},
G ]$ for $i\ge 2.$ Lewis proved in \cite{Lewis1} that $G_{i+1}\le
V_{i}\le G_{i}.$ In \cite{Lewis1}, Lewis showed that when
$V_{i}<G_{i},$ we have $V_{j}<G_{j}$ for all $j$ such that $1\le
j\le i.$ Also, in \cite{Lewis1}, Lewis proved that if $V_{2} <
G_{2}$, then there exists a prime $p$ such that $G_{i}/V_{i }$ is an
elementary abelian $p$-group for all $i\ge 1.$ In addition, he
proved that there exists a positive integer $n$ such that if $V_{3}
< G_{3}$, then $|G:V_{1}|=|G':V_{2}|^{2}=p^{2n}$. We are able to
generalize the results in \cite{Lewis1} to the case where
$V_{i}<G_{i}$ for $i>3.$ Also, we prove that the index of $V_{i-1}$
in $G_{i-1}$ is the same as the index of $D_{3}$ in $G.$

We define some subgroups that are useful to prove our results.
First, set $D_{3}/V_{3} = C_{G/V_{3}}(G'/V_{3}).$ Lewis proved in
\cite{Lewis1} that if $V_{3} < G_{3}$, then either
$|G:D_{3}|=\sqrt{|G:V_{1}|}$ or $D_{3}=V_{1}$. To study the case
when $i>3,$ we define some more subgroups. For each integer $i\ge3,$
set $ Y_{i}/V_{i}=Z(G/V_{i})$ and
$D_{i}/V_{i}=C_{G/V_{i}}(G_{i-1}/V_{i}).$

We say $G_{k}$ is $H_{1}$, if for every normal subgroup $N$ of $G$
where $V_{k}\le N<G_{k}$ we have $V_{k-1}/N=G_{k-1}/N\cap
Y_{k}(G/N).$ In \cite{Lewis1}, it was proved that $G_{3}$ is
$H_{1}.$ Under the additional hypothesis that $G'/V_{i}$ is abelian,
we are able to show that $G_{i}$ is $H_{1}$ for all $i>3.$ We are
also interested in computing the index of $V_{i}$ in $G_{i}$. We
will see that this index depends on the size of $D_{3}.$ In other
words, it depends on the size of the centralizer of $G'$ modulo
$V_{i}.$

We now come to our first theorem. When $V_{k}<G_{k}$, and $G_{i}$ is
$H_{1}$ for $i= 4,\cdots,k,$ we are able to prove that
$D_{k}=D_{3},$ which is very useful to prove some of the results of
this paper.
\begin{thm}
Assume that $V_{k}< G_{k}$, $G'/V_{k}$ is abelian, and $G_{i}$ is
$H_{1}$ for all $i=3,\cdots,k$. Then $D_{k}=D_{3}.$
\end{thm}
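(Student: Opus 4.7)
The plan is to induct on $k$, with base case $k=3$ the tautology $D_3=D_3$. For the inductive step with $k\ge 4$, note that the hypotheses at level $k$ imply those at level $k-1$: Lewis's result from \cite{Lewis1} shows $V_k<G_k$ forces $V_{k-1}<G_{k-1}$, $G'/V_{k-1}$ is a quotient of the abelian group $G'/V_k$, and $G_i$ is $H_1$ for $i=3,\dots,k-1$ by assumption. Thus $D_{k-1}=D_3$ by induction, and it suffices to prove $D_k=D_{k-1}$.

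For the inclusion $D_{k-1}\le D_k$, take $g\in D_{k-1}$ and apply the three-subgroup lemma with $A=G_{k-2}$, $B=G$, $C=\langle g\rangle$ relative to $V_k$. The commutator $[[g,G_{k-2}],G]$ lies in $[V_{k-1},G]=V_k$ by the defining property of $D_{k-1}$, while $[[G,g],G_{k-2}]\le [G',G']\le V_k$ since $G_{k-2}\le G'$ (valid for $k\ge 4$) and $G'/V_k$ is abelian. The lemma then yields $[g,G_{k-1}]=[g,[G_{k-2},G]]\le V_k$, so $g\in D_k$.

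For the reverse inclusion $D_k\le D_{k-1}$, fix $g\in D_k$, $h\in G_{k-2}$, and $y\in G$, and apply the three-subgroup lemma to $\langle g\rangle$, $\langle h\rangle$, $\langle y\rangle$ modulo $V_k$. Here $[[h,y],g]\le [G_{k-1},g]\le V_k$ because $g\in D_k$, and $[[y,g],h]\le [G',G']\le V_k$ by abelianness, so $[[g,h],y]\in V_k$. Since this holds for every $y\in G$, one has $[g,h]\in Y_k$, and since also $[g,h]\in [G,G_{k-2}]=G_{k-1}$, we have $[g,h]\in Y_k\cap G_{k-1}$. Now invoke the $H_1$ property of $G_k$ with $N=V_k$ (valid because $V_k<G_k$): this gives $V_{k-1}/V_k=G_{k-1}/V_k\cap Y_k(G/V_k)$. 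A short induction on $i$ shows $V_i(G/V_k)\le V_i/V_k$ (nonlinear characters of $G/V_k$ inflate to nonlinear characters of $G$ trivial on $V_k$, hence vanish off $V_1/V_k$), so in particular $V_k(G/V_k)=1$, which forces $Y_k(G/V_k)=Z(G/V_k)=Y_k/V_k$. Therefore $[g,h]\in V_{k-1}$, as required.

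The main technical hurdle will be the identification $Y_k(G/V_k)=Y_k/V_k$, which rests on tracking the behaviour of the vanishing-off subgroup under the quotient by $V_k$. Once this identification is in place, the rest is a clean double application of the three-subgroup lemma, in which the abelianness of $G'/V_k$ supplies the needed bound on $[G',G_{k-2}]$ in each direction.
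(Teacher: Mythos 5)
Your proof is correct, and its skeleton (induction on $k$, reduction to $D_{k}=D_{k-1}$, two inclusions) matches the paper's. The inclusion $D_{k}\le D_{k-1}$ is essentially the paper's Lemma \ref{fivee}: the same three-subgroups computation showing $[D_{k},G_{k-2}]\le G_{k-1}\cap Y_{k}$, followed by the $H_{1}$ consequence $G_{k-1}\cap Y_{k}=V_{k-1}$ (the paper packages this through the auxiliary subgroup $E_{k}$ and the identity $E_{k}=D_{k-1}$, and your care over $Y_{k}(G/V_{k})=Y_{k}/V_{k}$ is the content of the paper's Lemma \ref{twothree}). Where you genuinely diverge is the inclusion $D_{k-1}\le D_{k}$: you obtain it by a second, symmetric application of the three-subgroups lemma, using $[\langle g\rangle,G_{k-2},G]\le[V_{k-1},G]=V_{k}$ and $[G,\langle g\rangle,G_{k-2}]\le[G',G']\le V_{k}$, whereas the paper routes it through Corollary \ref{corone} and Lemma \ref{twentyy}, i.e.\ Hall's identity together with the fact (Lemma \ref{twotwo}) that for $a\in G_{k-2}\setminus V_{k-2}$ the commutators $[w,a]$ sweep out all of $G_{k-1}$ --- a conjugacy-class statement that itself requires $G_{k-1}$ to be $H_{1}$. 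Your version of this direction is more elementary and needs strictly less: it never invokes the $H_{1}$ property at level $k-1$. What the paper's longer detour buys is the stronger statement of Lemma \ref{twentyy} (that the centralizer of a single element $aV_{k-1}$ already lies in $D_{k}$), which is reused in the proof of Theorem 2; for Theorem 1 alone your shortcut suffices.
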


Our second theorem should be considered to be the main result of
this paper. We are able to prove that $|G_{i-1}:V_{i-1}|=|G:D_{3}|,$
for every $i \ge 4$, where $V_{i}< G_{i}$, and $G'/V_{i} $ is
abelian. Hence, for a nilpotent group of class $c,$ if $V_{c}
<G_{c},$ and $G'/V_{c}$ is abelian, then we have
$|G_{i-1}:V_{i-1}|=|G:D_{3}|$ for all $4\le i\le c,$ and
$|G_{c}:V_{c}|\le |G:D_{3}|.$
\begin{thm}
Assume that $V_{k}< G_{k}$, $G'/V_{k} $ is abelian, for some $k\geq3.$ Then\\
(a) $|G_{k-1}:V_{k-1}|=|G:D_{3}|$ for $k\ge 4.$\\
(b) $D_{k}=D_{3}.$\\
(c) $G_{k}$ is $H_{1}$.\\
(d) $|G_{k}:V_{k}|\le |G:D_{3}|.$
\end{thm}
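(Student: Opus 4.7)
The strategy is induction on $k \ge 3$, treating the four conclusions simultaneously. The base case $k = 3$ is supplied by Lewis in \cite{Lewis1}: (a) is vacuous, (b) is trivial, (c) is his theorem that $G_3$ is $H_1$, and (d) reduces to his index calculation in the $k = 3$ situation.

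The inductive step opens with a preparatory commutator estimate $[D_3, G_i] \le V_{i+1}$ for $2 \le i \le k - 1$, proved by a subsidiary induction on $i$. The case $i = 2$ is the definition of $D_3$. For the inductive step one applies the Three Subgroup Lemma to bound
\[
[D_3, G_{i+1}] \le [[G, D_3], G_i] \cdot [[D_3, G_i], G];
\]
the first factor is contained in $[G_2, G_i] \le G'' \le V_k$, using the hypothesis that $G'/V_k$ is abelian, while the second is contained in $V_{i+2}$ by the inductive bound, and $V_k \le V_{i+2}$ whenever $i+2 \le k$. In particular $[D_3, G_{k-1}] \le V_k$, so $D_3 \le D_k$.

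I would next prove (a). The inductive hypothesis (d) at $k - 1$ gives $|G_{k-1}:V_{k-1}| \le |G:D_3|$. For the matching lower bound, the preparatory estimate makes the commutator pairing
\[
G/D_3 \times G_{k-2}/V_{k-2} \longrightarrow G_{k-1}/V_{k-1}
\]
well defined; its left radical vanishes by the inductive (b) ($D_{k-1} = D_3$), and its right radical vanishes by the inductive (c) applied with $N = V_{k-1}$ (forcing $G_{k-2} \cap Y_{k-1} = V_{k-2}$). Combined with the inductive (a) equality $|G_{k-2}:V_{k-2}| = |G:D_3|$, a bilinear form argument modeled on Lewis's treatment of $k = 3$ then delivers $|G_{k-1}:V_{k-1}| \ge |G:D_3|$.

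For (c), let $N \triangleleft G$ with $V_k \le N < G_k$. The containment $V_{k-1}/N \le G_{k-1}/N \cap Y_k(G/N)$ follows at once from $[V_{k-1}, G] = V_k \le N$. The reverse inclusion is obtained by passing to $G/N$ and using the abelian hypothesis $G'/V_k$ (which descends since $V_k \le N$) to force any $xN$ in the intersection to satisfy $x \in V_{k-1}$, via a reduction to the inductive statement at level $k - 1$ inside $G/N$. With (c) in hand, the full hypotheses of Theorem 1 are available and yield (b): $D_k = D_3$. Finally, (d) follows by a parallel bilinear analysis applied to the pairing $G/D_k \times G_{k-1}/V_{k-1} \to G_k/V_k$, now using (a) and (b) at level $k$ together with the observation that this pairing surjects onto $G_k/V_k$. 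The principal obstacle is step (c): propagating the $H_1$ property to the next level requires the abelian hypothesis $G'/V_k$ in an essential way and cannot be reduced directly to Lewis's $k = 3$ case.
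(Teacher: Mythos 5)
Your overall skeleton matches the paper's (induction on $k$ with Lewis's $k=3$ case as the base, part (b) from part (c) via Theorem 1, part (d) from the surjectivity of $g\mapsto[g,a]$), and your preparatory estimate $[D_3,G_i]\le V_{i+1}$ via the Three Subgroups Lemma is a clean alternative route to $D_3\le D_k$ (the paper instead obtains $D_{k-1}\le D_k$ from Hall's identity in Lemma \ref{twentyy}). The crucial step, however --- the lower bound $|G_{k-1}:V_{k-1}|\ge|G:D_3|$ in part (a) --- is a genuine gap. A bilinear pairing $A\times B\to C$ of elementary abelian $p$-groups with trivial left and right radicals and $|A|=|B|$ gives no lower bound on $|C|$: a nondegenerate symplectic form $(\mathbb{Z}/p)^{2m}\times(\mathbb{Z}/p)^{2m}\to\mathbb{Z}/p$ is the standard counterexample, and Lewis's $k=3$ computation $|G:V_1|=|G':V_2|^2$ is of exactly this ``square'' type, so modelling on it would, if anything, produce an upper bound for $|G_{k-1}:V_{k-1}|$, which you already have from the inductive (d). What you actually need is that some individual partial map $g\mapsto[g,a]V_{k-1}$ has kernel exactly $D_3$; triviality of the left radical only says that the intersection of these kernels over all $a$ equals $D_3$, and each kernel could still properly contain it.

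The paper closes this gap by a mechanism your sketch omits entirely: reduce to quotients $G/N$ with $V_k\le N<G_k$ and $|G_k:N|=p$; there every $x\in G_{k-1}\setminus(G_{k-1}\cap Y_k)$ has conjugacy class of size exactly $p$ (Lemma \ref{twoone}), so expressing $D_k$ as an intersection of $t$ such centralizers yields $|G:D_k|\le p^t=|G_{k-1}:G_{k-1}\cap Y_k|$ (Lemma \ref{seventeenprimee}). Playing this off against the opposite inequalities coming from $D_k\le E_k$ (Lemma \ref{fivee}, where the abelian hypothesis enters) and Lemma \ref{fifty} forces equality throughout the resulting chain, which simultaneously delivers (a), the identity $V_{k-1}=G_{k-1}\cap Y_k$ in each $G/N$ (hence (c)), and $D_k=D_3$. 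Your treatment of (c) suffers from the same omission: you assert it via ``a reduction to the inductive statement at level $k-1$ inside $G/N$,'' but the $H_1$ property of $G_k$ is a statement at level $k$ in $G/N$ and does not follow from the level-$(k-1)$ induction; in the paper it falls out only of the counting argument just described. Without the $|G_k:N|=p$ reduction and the centralizer-intersection count, your induction does not close.
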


Let $G$ be a finite group, we say that $G$ is a Camina group if
$cl(x)=xG'$ for every $x\in G\setminus G'.$  If $3\le i\le k-1,$
then $V_{i}$ will satisfy the same hypothesis as a Camina group. So,
$D_{i}=D_{3},$ $G_{i}$ is $H_{1}$ and when $i\ge 4,$
$|G_{i-1}:V_{i-1}|=|G:D_{3}|.$ Note that the above result was
motivated from the bound of subgroups by MacDonald in
\cite{MacDonald1}, where he proved that $|G_{3}|\le |G:G'|$ for a
Camina group $G.$  Our motivation for adding the hypothesis
$G/V_{k}$ abelian is that the results in \cite{MacDonald1} were
under the hypothesis that $G$ is metabelian (i.e., $G'$ is abelian.)
Hence, proving this conclusion under a similar metabelian hypothesis
seems like a reasonable first step. In the Camina group case,
removing the metabelian hypothesis required totally different
techniques.

In closing, as an application of our techniques we answer an open
question about Camina groups. In \cite{MacDonald1}, MacDonald
conjectured that if $G$ is a Camina group of nilpotence class $3,$
then $|G_{3}| \le p^{n},$ where $p^{2n}=|G:G'|.$ He gave a sketch of
a proof. But Dark and Scoppola observed in \cite{Drak1} that
MacDonald's proof was not conclusive. So, they proved that if $G$ is
a Camina group of nilpotence class $3,$ then $|G_{3}| \le
p^{\frac{3n}{2}}.$ In our third theorem, we give a conclusive proof
of MacDonald's conjecture.
\begin{thm}
If $G$ is a Camina group of nilpotence class $3$ with
$|G:G'|=p^{2n},$ then $|G_{3}| \le p^{n}.$
\end{thm}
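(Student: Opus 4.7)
The plan is to specialize the $V$-series framework of the paper to the Camina setting. For a Camina $p$-group $G$, the vanishing off subgroup $V(G)$ equals $G'$, so the series reads $V_1 = G'$, $V_2 = [G', G] = G_3$ and $V_3 = G_4 = 1$. Under the class-$3$ hypothesis, $G'$ is automatically abelian (since $[G', G'] \le G_4 = 1$) and $V_3 = 1 < G_3$, so the hypotheses of Theorem~$2$ hold with $k = 3$. Lewis's earlier result furnishes $|G : G'| = |G' : G_3|^2 = p^{2n}$, hence $|G' : G_3| = p^n$. Theorem~$2$(d) with $k = 3$ then delivers the key inequality
\[
|G_3| \;=\; |G_3 : V_3| \;\le\; |G : D_3|,
\]
where $D_3 = C_G(G')$ since $V_3 = 1$.

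By Lewis's dichotomy recalled in the introduction, either $|G:D_3| = p^n$, in which case $|G_3| \le p^n$ follows immediately, or $D_3 = V_1 = G'$, i.e.\ $C_G(G') = G'$. The first alternative closes the proof at once; the second is the real work.

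For the case $C_G(G') = G'$, the plan is to study the commutator pairing
\[
c \colon G/G' \times G'/G_3 \to G_3, \qquad (xG', yG_3) \mapsto [x, y],
\]
which is well-defined and bilinear since $G'$ is abelian and $G_3$ is central. It is left non-degenerate by the case hypothesis, and right non-degenerate because the Camina condition forces $Z(G) \le G'$ (otherwise a central element outside $G'$ would have singleton class, contradicting $cl(x) = xG'$), while the $H_1$ property of $G_3$ applied with $N = 1$ gives $G_3 = G' \cap Z(G)$. The Camina condition further provides, for each $x \notin G'$, that the alternating form $\alpha(x, \cdot) \colon G/G' \to G'/G_3$, $y \mapsto [x, y] G_3$, is surjective. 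The Hall--Witt identity in class $3$,
\[
[[x, y], z] \, [[y, z], x] \, [[z, x], y] \;=\; 1,
\]
couples $\alpha$ to the pairing $c$; the aim is to extract, from this coupling together with the non-degeneracy of $c$ and the surjectivity of every nonzero row of $\alpha$, the bound $\dim_{\mathbb{F}_p} G_3 \le n$, that is $|G_3| \le p^n$. This second case is the main obstacle of the proof: all the work lies in squeezing the dimension inequality out of the interaction between $\alpha$ and $c$ mediated by Hall--Witt.
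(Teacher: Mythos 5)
Your setup and the first half of the argument track the paper closely: the identifications $V_1=G'$, $V_2=G_3$, $V_3=G_4=1$, the observation that $G'$ is abelian in class $3$, and the inequality $|G_3|=|G_3:V_3|\le|G:D_3|$ (via Theorem~2(d), which is the paper's Corollary~\ref{seventeencomposite}/Lemma~\ref{seventeeen} applied with $i=3$) are all exactly what the paper does. The divergence, and the genuine gap, is in how $|G:D_3|$ is controlled. You invoke Lewis's dichotomy (either $|G:D_3|=p^n$ or $D_3=V_1=G'$) and then concede that the second alternative, where $|G:D_3|=|G:G'|=p^{2n}$ and the inequality gives nothing, ``is the real work.'' What you offer for that case is only a plan --- a commutator pairing $G/G'\times G'/G_3\to G_3$, non-degeneracy considerations, and the Hall--Witt identity, with the stated \emph{aim} of extracting $\dim_{\mathbb{F}_p}G_3\le n$. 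No actual argument is given, so the proof is incomplete. Worse, this is precisely the style of linear-algebra-plus-Hall--Witt argument that MacDonald originally sketched and that Dark and Scoppola showed to be inconclusive (their careful version of such reasoning only yields $|G_3|\le p^{3n/2}$), so there is no reason to expect the plan to close without a substantially new idea.

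The paper sidesteps the dichotomy entirely: for a Camina group of nilpotence class $3$, MacDonald proved outright that $|G:D_3|=|G:C_G(G')|=p^n$, so the second alternative never occurs and the single inequality $|G_3|\le|G:D_3|=p^n$ finishes the proof. If you want to repair your write-up, the cleanest fix is to replace the dichotomy by this citation (or to prove directly that $C_G(G')>G'$ for a Camina group of class $3$); as it stands, the case $D_3=G'$ is an open hole in your argument.
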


Acknowledgement: I would like thank my advisor, Dr. Mark Lewis, 
for his input and the useful weekly discussions regarding this
paper. This research is a part of my doctoral dissertation.

\section{General Lemmas}
In this section, we prove some lemmas that are useful for the proofs
of our theorems. Also, some of these facts give us a good idea about
the relation between the lower central series and the central series
associated with the vanishing off subgroup that we defined in the
introduction. Lewis showed in \cite{Lewis1} that both series are
related by proving that $V_{i}\le G_{i}\le V_{i-1}.$ We now show
that if $G_{k}$ is $H_{1},$ then $V_{k-1}=G_{k-1}\cap Y_{k}.$
\begin{lemma}\label{Hone}
Assume that $V_{k}<G_{k}.$ If there exists $N$ such that $ V_{k}\le
N<G_{k}$ with $V_{k-1}/N=(G_{k-1}/N) \cap Z(G/N),$ then
$V_{k-1}=G_{k-1}\cap Y_{k}.$
\end{lemma}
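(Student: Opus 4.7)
The plan is to verify the equality $V_{k-1}=G_{k-1}\cap Y_{k}$ by proving each inclusion separately, and the argument turns out to be essentially a bookkeeping exercise once the definitions are unpacked correctly.

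For the inclusion $V_{k-1}\subseteq G_{k-1}\cap Y_{k}$, I would first invoke the inclusion $V_{k-1}\le G_{k-1}$ from Lewis's result $V_{i}\le G_{i}\le V_{i-1}$ quoted at the start of Section~2. For the containment $V_{k-1}\le Y_{k}$, I would simply apply the defining relation $V_{k}=[V_{k-1},G]$: this says $[v,g]\in V_{k}$ for every $v\in V_{k-1}$ and every $g\in G$, so $vV_{k}$ is central in $G/V_{k}$, i.e.\ $vV_{k}\in Z(G/V_{k})=Y_{k}/V_{k}$, and therefore $v\in Y_{k}$. Combining these two observations yields $V_{k-1}\le G_{k-1}\cap Y_{k}$.

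For the reverse inclusion, take $x\in G_{k-1}\cap Y_{k}$. Since $x\in Y_{k}$, we have $[x,g]\in V_{k}$ for every $g\in G$, and because $V_{k}\le N$ by hypothesis, this gives $[x,g]\in N$ for every $g$; equivalently, $xN\in Z(G/N)$. Since we also have $x\in G_{k-1}$, it follows that $xN\in (G_{k-1}/N)\cap Z(G/N)$. The hypothesis on $N$ now says $(G_{k-1}/N)\cap Z(G/N)=V_{k-1}/N$, so $xN\in V_{k-1}/N$, and since $V_{k}\le N\le V_{k-1}$ (the first by assumption and the second because $V_{k-1}/V_k$ is central as just noted, so in particular $N$ lies in the preimage of things killed by commuting with $G$\textemdash although more directly, $N<G_k\le V_{k-1}$), we conclude $x\in V_{k-1}$.

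There is no serious obstacle here; the only subtlety worth double-checking is that every element of $G_{k-1}\cap Y_{k}$ really does map into a central element of $G/N$, and this is automatic because the relation $V_{k}\le N$ lets us push commutators from $V_{k}$ down into $N$. Once that is observed, the hypothesis on $N$ is applied verbatim. I would keep the write-up to a few lines, presenting the two inclusions in the order above.
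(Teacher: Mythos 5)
Your proof is correct and is essentially the paper's argument: the paper compresses your two inclusions into a single chain $V_{k-1}/N \le (Y_k/N)\cap(G_{k-1}/N) \le Z(G/N)\cap(G_{k-1}/N) = V_{k-1}/N$, using exactly the same facts ($V_{k-1}\le G_{k-1}\cap Y_k$ from $V_k=[V_{k-1},G]$, and $Y_k/N\le Z(G/N)$ from $V_k\le N$). Your parenthetical justifying $N\le V_{k-1}$ is momentarily muddled, but the clause ``more directly, $N<G_k\le V_{k-1}$'' is the correct reason, so nothing is missing.
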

\begin{proof}
Observe that $Y_{k}/N\le Z(G/N).$ We have
$$V_{k-1}/N \le (Y_{k}\cap G_{k-1})/N = (Y_{k}/N) \cap (G_{k-1}/N) \le
Z(G/N)\cap (G_{k-1}/N) =V_{k-1}/N.$$ Thus, we obtain equality
throughout, and $V_{k-1}=G_{k-1}\cap Y_{k}$ as desired.
\end{proof}
As an immediate consequence, note that if $G_{k}$ is $H_{1},$ then
$V_{k-1}=G_{k-1}\cap Y_{k}.$ This next lemma is well known.
\begin{lemma}\label{twoone}
If $G$ is nilpotent and $|G_{i}|=p,$ then for every $x \in
G_{i-1}\setminus( G_{i-1}\cap Y_{i})$, we have $cl(x)=xG_{i}.$
\end{lemma}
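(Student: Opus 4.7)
The plan is to study the commutator map $g \mapsto [x,g]$ from $G$ to $G_i$ and show that its image has order exactly $p$. The pivotal preliminary fact is that $G_i \le Z(G)$: because $G$ is nilpotent, every nontrivial normal subgroup meets the center nontrivially, and since $|G_i|=p$ is prime, this forces $G_i \le Z(G)$. Centrality of $G_i$ is what makes the subsequent commutator calculation a homomorphism.

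Next, I would define $\phi \colon G \to G_i$ by $\phi(g)=[x,g]$ (well defined since $x \in G_{i-1}$ gives $[x,g]\in G_i$). Using the standard identity $[x,gh]=[x,h][x,g]^h$ together with $[x,g]\in G_i \le Z(G)$ and the fact that $G_i$ is abelian, one gets $\phi(gh)=\phi(g)\phi(h)$. The kernel is exactly $C_G(x)$, so $|cl(x)|=|G:C_G(x)|=|\phi(G)|$ is a subgroup order of $G_i\cong \mathbb{Z}/p$, hence equals $1$ or $p$.

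The last step is to rule out $|cl(x)|=1$. If $x$ were central in $G$, then $xV_i$ would lie in $Z(G/V_i)=Y_i/V_i$, so $x\in Y_i$, and combined with $x\in G_{i-1}$ this contradicts the hypothesis $x\notin G_{i-1}\cap Y_i$. Hence $|cl(x)|=p$. Since we always have $cl(x)\subseteq xG_i$ (each conjugate is $x^g = x[x,g]$) and $|xG_i|=p$ as well, the two sets coincide.

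There is no real obstacle here; the lemma is a clean consequence of centrality of $G_i$ in the nilpotent group $G$, which turns the commutator map into a homomorphism into a group of prime order, leaving only the trivial-image case to be excluded via the hypothesis $x\notin Y_i$.
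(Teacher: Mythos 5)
Your proof is correct, but it takes a genuinely different route from the paper's. You first establish that $G_{i}\le Z(G)$ (a nontrivial normal subgroup of a nilpotent group meets the center, and $|G_{i}|=p$ forces containment), and then exploit centrality to make $g\mapsto [x,g]$ a homomorphism from $G$ onto a subgroup of $G_{i}$; Lagrange then gives $|cl(x)|=|\phi(G)|\in\{1,p\}$ directly. The paper never observes that $G_{i}$ is central; instead it writes the nilpotent group as $G=P\times Q$ with $P$ the Sylow $p$-subgroup, notes that $G_{i}=P_{i}$ and $Q\le C_{G}(x)$, and concludes that $|cl(x)|$ divides $|P|$, so that the bound $|cl(x)|\le |xG_{i}|=p$ (from $G_{i-1}/G_{i}$ being central in $G/G_{i}$) again leaves only the values $1$ and $p$. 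Both arguments finish identically by ruling out $|cl(x)|=1$ via $x\notin Y_{i}$. What your version buys is economy: it avoids the Sylow decomposition entirely and uses exactly the commutator-homomorphism device the paper itself deploys later in Lemma \ref{seventeeen}, so it unifies the two lemmas; what the paper's version buys is that it never has to verify the homomorphism identity $[x,gh]=[x,h][x,g]^{h}$, only the elementary divisibility of class sizes.
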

\begin{proof}
Because $G$ is nilpotent, we can write $G=P\times Q$ where $P$ is a
$p$-group and $Q$ is a $p'$-group. Hence, $G_{i-1}= P_{i-1}\times
Q_{i-1}.$ As $|G_{i}|=p$, we have $G_{i}=P_{i}$. In particular,
$Q_{i-1}\le Z(G).$ Observe that $G_{i-1}/G_{i}$ is central in
$G/G_{i}.$ Thus, it follows that $cl(x) \subseteq xG_{i}.$ We deduce
that $|cl(x)|\leq p$. Recall that $x \in G_{i-1}\setminus Y_{i},$
which implies that $Q \le C_{G}(x)$. Now, $|cl(x)|=|G:C_{G}(x)|$
divides $|G:Q|=|P|$. Therefore, $|cl(x)|$ is either $1$ or $p$.
Since $x$ is not central, we must have $|cl(x)|=p=|xG_{i}|.$ We
conclude that $cl(x)=xG_{i}.$
\end{proof}

Now, we get a relationship between the central series associated
with the vanishing off subgroup of the whole group and a quotient
group of that group.
\begin{lemma}\label{twothree}
Assume that $V_{k}<G_{k},$ for some $k\ge 3.$ Then for every normal
subgroup $ N<G_{k}$ we have $V_{i}(G/N)=V_{i}/N$ for every $2\leq i
\leq k.$
\end{lemma}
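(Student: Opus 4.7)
The plan is to induct on $i$, with the base case $i=2$ being the crux. Since $N<G_{k}\le V_{k-1}\le V_{i}$ for each $2\le i\le k-1$ (by Lewis's containment $G_{i+1}\le V_{i}\le G_{i}$), the expression $V_{i}/N$ is a genuine subgroup of $G/N$ in that range; at $i=k$ one interprets it as $V_{k}N/N$.

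The inclusion $V_{i}(G/N)\subseteq V_{i}/N$ is the easy direction. Starting from $V_{1}(G/N)\subseteq V_{1}/N$: any $\chi\in{\rm nl}(G/N)$ inflates to $\chi\in{\rm nl}(G)$ with $N\le\ker\chi$, so every $\bar g$ with $\chi(\bar g)\ne 0$ lifts to some $g\in V(G)$, whence $V(G/N)\subseteq V(G)/N$. Then inductively $V_{i+1}(G/N)=[V_{i}(G/N),G/N]\subseteq [V_{i}/N,G/N]=V_{i+1}N/N$.

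For the reverse inclusion, only the base case $V_{2}/N\subseteq V_{2}(G/N)$ requires real work, because once it is in hand the induction step $V_{i+1}(G/N)=[V_{i}(G/N),G/N]=[V_{i}/N,G/N]=[V_{i},G]N/N=V_{i+1}N/N$ is a direct equality. To handle the base case, let $U$ be the preimage in $G$ of $V(G/N)$, so $N\le U\le V_{1}$. Applying Lewis's $G_{2}\le V_{1}$ to $G/N$ gives $V(G/N)\supseteq G'/N$, hence $U\supseteq G'\supseteq V_{2}$. Since $V_{1}/V_{2}$ is central in $G/V_{2}$ (by definition $V_{2}=[V_{1},G]$) and $V_{3}=[V_{2},G]\le [U,G]$, the commutator induces a well-defined bilinear pairing $V_{1}/U\times G/G'\to V_{2}/([U,G]N)$, and the desired inclusion $V_{2}\subseteq [U,G]N$ is equivalent to the triviality of this pairing.

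Showing triviality is the main obstacle. My planned approach is character-theoretic: exploit the description of $V(G/N)$ via supports of nonlinear characters of $G/N$ together with the order constraints $|G:V_{1}|=|G':V_{2}|^{2}=p^{2n}$ that Lewis derived from $V_{3}<G_{3}$ (which holds here since $V_{k}<G_{k}$ forces $V_{3}<G_{3}$), and deduce that the pairing must degenerate. Once the base case is settled, the induction above completes the proof for all $2\le i\le k$.
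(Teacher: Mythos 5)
Your overall architecture (induct on $i$, easy inflation inclusion one way, reduce everything to the base case $i=2$) matches the paper's, and your observations that $N\le V_{i}$ for $i\le k-1$ and that $V_{k}<G_{k}$ forces $V_{3}<G_{3}$ are correct. But the proof has a genuine gap exactly where the lemma's content lives: the reverse inclusion $V_{2}/N\subseteq V_{2}(G/N)$. You correctly reduce it to showing $V_{2}\subseteq[U,G]N$, equivalently to the degeneracy of the commutator pairing $V_{1}/U\times G/G'\to V_{2}/([U,G]N)$, but then you stop: ``showing triviality is the main obstacle'' followed by a ``planned approach'' is an announcement, not an argument. Moreover, the plan as stated is not convincing. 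The counting identities $|G:V_{1}|=|G':V_{2}|^{2}=p^{2n}$ constrain orders, not the kernel of this particular pairing, and there is no indicated mechanism by which they would force $V_{1}/U$ to pair trivially with all of $G/G'$. What you are implicitly trying to reprove from scratch is essentially Lewis's Lemma 2.2, namely that $V_{1}(G/V_{2})=V(G)/V_{2}$, which is a substantive character-theoretic fact and the real engine of this lemma.

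The paper's proof of the base case is a short sandwich that you should compare against. Let $X/N=V(G/N)$. The easy direction (your inflation argument, which is Lewis's Lemma 3.3) gives $X\le V(G)$. For the other inclusion, apply Lemma 3.3 to the quotient $(G/N)/(V_{2}/N)\cong G/V_{2}$ and invoke Lewis's Lemma 2.2 to get
$$V(G)/V_{2}=V_{1}(G/V_{2})=V_{1}\bigl((G/N)/(V_{2}/N)\bigr)\le V(G/N)(V_{2}/N)/(V_{2}/N)\cong XV_{2}/V_{2}=X/V_{2},$$
using $V_{2}\le G'\le X$. Hence $V(G)\le X$, so $X=V(G)$, and then $V_{2}(G/N)=[X/N,G/N]=[V_{1},G]N/N=V_{2}/N$; the higher terms follow by the commutator induction you already have. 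To repair your write-up, either cite Lemma 2.2 of Lewis at the point where you need the pairing to degenerate, or supply a complete character-theoretic proof of that fact; as it stands the central step is missing.
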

\begin{proof}
We prove this by induction. In Lemma 2.2 in \cite{Lewis1}, we have
$V_{1}(G/V_{2})= V(G)/V_{2}.$ Let $X/N=V(G/N).$ By Lemma 3.3 in
\cite{Lewis1}, $X\le V(G).$ On the other hand, $V_{2}/N$ is normal
in $G/N.$ By Lemma 3.3 in \cite{Lewis1} applied to $G/N$, we have
$V(G)/V_{2} =V_{1}(G/V_{2})= V_{1}((G/N)/(V_{2}/N))\le
V(G/N)/(V_{2}/N)=(X/N)/(V_{2}/N)\cong X/V_{2}.$ So, $V(G)\le X.$ We
deduce that $X=V(G),$ and $V_{2}(G/N)=V_{2}/N.$ This is the initial
case of the induction. Now, suppose that $i>2$ and assume that
$V_{i-1}(G/N)=V_{i-1}/N.$ Therefore, $V_{i}(G/N)=[ V_{i-1}(G/N),
G/N] =[V_{i-1}/N,G/N]=[V_{i-1},G]N/N=V_{i}/N$ as desired.
\end{proof}
Now, we see the importance of the $H_{1}$ hypothesis.
\begin{lemma}\label{twotwo}
If $V_{i} =1$ and $G_{i}$ is $H_{1}$, then for every $x \in G_{i-1}
\setminus V_{i-1}$ we have $cl(x)=xG_{i}.$
\end{lemma}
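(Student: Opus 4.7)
The strategy is to apply the $H_{1}$ hypothesis at level $i$ to the particular normal subgroup $N:=[x,G]$ itself.

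First, I would observe that Lewis's inclusion $G_{i+1}\le V_{i}$ combined with $V_{i}=1$ forces $G_{i+1}=1$, so $G_{i}\le Z(G)$. For any $y\in G_{i-1}$, every commutator $[y,g]$ lies in the central subgroup $G_{i}$, so the standard identity $[y,gh]=[y,h]\,[y,g]^{h}$ collapses to $[y,gh]=[y,h]\,[y,g]$ and shows that the map $g\mapsto [y,g]$ is a homomorphism $G\to G_{i}$. Consequently $[y,G]$ is a subgroup of $G_{i}$, it lies in $Z(G)$, and it is therefore normal in $G$. Moreover $cl(y)=y[y,G]\subseteq yG_{i}$ is automatic.

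Now fix $x\in G_{i-1}\setminus V_{i-1}$ and set $N:=[x,G]$. Suppose for contradiction that $N<G_{i}$. Since $V_{i}=1\le N<G_{i}$, the hypothesis that $G_{i}$ is $H_{1}$ yields
\[
V_{i-1}/N=(G_{i-1}/N)\cap Y_{i}(G/N).
\]
By Lemma~\ref{twothree}, $V_{i}(G/N)=V_{i}/N=1$, so the definition of $Y_{i}$ gives $Y_{i}(G/N)=Z(G/N)$. On the other hand, $[x,G]=N$ says exactly that $xN$ is central in $G/N$, and clearly $xN\in G_{i-1}/N$ because $x\in G_{i-1}$. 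Combining these facts places $xN$ in $V_{i-1}/N$, whence $x\in V_{i-1}$, contradicting the choice of $x$.

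Therefore $N=G_{i}$, and together with the containment already established this gives $cl(x)=xG_{i}$. The pivotal step---and the only nontrivial one---is the choice $N=[x,G]$ as the test subgroup: this is precisely the normal subgroup modulo which $x$ becomes central, so $H_{1}$ is forced to funnel $x$ into $V_{i-1}$ unless $[x,G]$ has already exhausted $G_{i}$.
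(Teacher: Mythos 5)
Your proof is correct and follows essentially the same route as the paper: assume $[x,G]<G_{i}$, choose a normal subgroup $N$ with $[x,G]\le N<G_{i}$, and invoke the $H_{1}$ condition to force $xN\in V_{i-1}/N$ and hence $x\in V_{i-1}$, a contradiction. The only (harmless) difference is that you take $N=[x,G]$ itself---after correctly verifying it is a central, hence normal, subgroup of $G$---whereas the paper takes a subgroup of index $p$ in $G_{i}$ containing $[x,G]$.
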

\begin{proof}
Since $V_{i}=1$, we have $G_{i}$ is central in $G.$ Thus, $[x,G]$ is
central. This implies that $[x,G]=\{x^{-1}x^{g}\mid g\in G\}.$ It
follows that the map $a\mapsto x^{-1}a$ is a bijection from $cl(x)$
to $[x,G].$ Hence, $cl(x)=xG_{i}$ if and only if $[x,G] = G_{i}$.
Since $x\in G_{i-1},$ it follows that $[x,G] \leq G_{i}.$ Suppose
that $[x,G]<G_{i},$ and we want to find a contradiction. We can find
$N$ such that $[x,G] \leq N < G_{i},$ where $|G_{i}: N|=p$. Since
$x\not\in Y_{i},$ $[x,G]\neq 1.$ Thus, $N>1.$ Applying Lemma
\ref{twothree}, it is not difficult to see that $V_{i-1}(G/N) =
V_{i-1}/N$. Notice that $xN \in Y_{i}(G/N)$. On the other hand, we
have $xN \in G_{i-1}/N = (G/N)_{i-1}.$ Thus, since $G_{i}$ is
$H_{1},$ we have $xN \in Y_{i}(G/N) \cap (G_{i-1}/N) =V_{i-1}(G/N)
\leq V_{i-1}/N$. Therefore, $x \in V_{i-1},$ which contradicts the
choice of $x$.
\end{proof}

The following result is a nice consequence of Lemma \ref{twotwo}
that gives us a good idea about the irreducible characters in
${\rm{Irr}} (G | G_{k}).$

\begin{lemma}
If $V_{k} = 1$ and $G_{k}$ is $H_{1}$, then all the characters in
${\rm{Irr}} (G | G_{k})$ vanish on $G_{k-1} \setminus V_{k-1}$.

\end{lemma}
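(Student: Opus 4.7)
The plan is to combine Lemma \ref{twotwo} with the elementary fact that an irreducible character is scalar on central elements, applied to the coset $xG_k$.

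First, I would fix any $\chi \in \mathrm{Irr}(G \mid G_k)$ and any $x \in G_{k-1} \setminus V_{k-1}$. By Lemma \ref{twotwo}, the hypotheses $V_k = 1$ and ``$G_k$ is $H_1$'' imply that $\mathrm{cl}(x) = xG_k$. In particular, $\chi(xg) = \chi(x)$ for every $g \in G_k$, since $\chi$ is a class function.

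Next, I would observe that $G_k$ lies in $Z(G)$. This follows from the inclusion $G_{k+1} \le V_k$ proved in \cite{Lewis1}: the assumption $V_k = 1$ forces $G_{k+1} = 1$, hence $[G_k,G] = 1$. Therefore each $g \in G_k$ acts as a scalar on the representation affording $\chi$; writing $\omega_\chi$ for the central character of $\chi$, this gives the identity
\[
\chi(xg) = \omega_\chi(g)\,\chi(x) \qquad \text{for all } g \in G_k.
\]
Combining this with $\chi(xg) = \chi(x)$ yields $\chi(x)\bigl(1 - \omega_\chi(g)\bigr) = 0$ for every $g \in G_k$.

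Finally, because $\chi \in \mathrm{Irr}(G \mid G_k)$, we have $G_k \not\le \ker(\chi)$; as $G_k \le Z(G)$, the restriction of $\omega_\chi$ to $G_k$ is nontrivial, so there exists $g \in G_k$ with $\omega_\chi(g) \neq 1$. The displayed equation then forces $\chi(x) = 0$, as desired.

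There is no substantial obstacle here: once Lemma \ref{twotwo} delivers the key equality $\mathrm{cl}(x) = xG_k$, the argument reduces to the standard vanishing trick for characters that are nontrivial on a central subgroup. The only small point to verify is that $V_k = 1$ really does place $G_k$ inside $Z(G)$, which is immediate from $G_{k+1} \le V_k$.
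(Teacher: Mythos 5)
Your proof is correct, but it takes a genuinely different route from the paper's. Both arguments hinge on Lemma \ref{twotwo} supplying $\mathrm{cl}(x)=xG_k$, but after that point they diverge. The paper fixes only $x$ and runs a global counting argument: it applies the second orthogonality relation in $G$ to get $|C_G(x)|=|G:G_k|$, applies it again in $G/G_k$ (where $xG_k$ is central) to show the characters of $G/G_k$ already account for that entire sum, and concludes that $\sum_{\chi\in{\rm Irr}(G\mid G_k)}|\chi(x)|^2=0$, killing all the relevant characters at once. You instead fix an individual $\chi\in{\rm Irr}(G\mid G_k)$ and argue locally: $\chi$ is constant on $xG_k=\mathrm{cl}(x)$, while Schur's lemma gives $\chi(xg)=\omega_\chi(g)\chi(x)$ for $g$ in the central subgroup $G_k$, and since $G_k\not\le\ker\chi$ forces $\omega_\chi|_{G_k}$ to be nontrivial, you get $\chi(x)=0$ directly. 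Your identification of $G_k\le Z(G)$ via $G_{k+1}\le V_k=1$ is exactly right (the paper uses the same observation inside Lemma \ref{twotwo}), and the step equating ``$G_k\le\ker\chi$'' with ``$\omega_\chi|_{G_k}$ trivial'' is the standard fact $\ker\chi\cap Z(G)=\ker(\omega_\chi|_{Z(G)})$. What each approach buys: yours is more elementary in that it avoids the orthogonality relations entirely and isolates why each single character must vanish; the paper's is slicker in that one computation disposes of every character in ${\rm Irr}(G\mid G_k)$ simultaneously without invoking central characters.
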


\begin{proof}
Consider $x\in G_{k-1}\setminus V_{k-1}$.  By Lemma \ref{twotwo} we
have $ cl(x) = xG_{k}$. Applying the second orthogonality relation,
which is Theorem 2.18 in \cite{Isaacs1}, we obtain
$$|G|/|G_{k}|= |G|/|cl(x)|=|C_{G}(x)|= \sum_{\chi \in
{\rm{Irr}}(G)}|\chi(x)|^{2} =  \sum_{\chi \in
{\rm{Irr}}(G/G_{k})}|\chi(x)|^{2} + \sum_{\chi \in {\rm{Irr}}(G\mid
G_{k})}|\chi(x)|^{2}.$$ Since $G_{k-1}/G_{k}$ is central in
$G/G_{k},$ we can use the second orthogonality relation in $G/N$ to
see that
$$ |G:G_{k}| = \sum_{\chi \in {\rm{Irr}}(G/G_{k})}|\chi(xG_{k})|^{2} = \sum_{\chi \in {\rm{Irr}}(G/G_{k})}|\chi(x)|^{2}.$$
Hence, $$\sum_{\chi \in {\rm{Irr}}(G\mid G_{k})}|\chi(x)|^{2} =0.$$
Since $|\chi(x)|^{2} \geq 0$ for each $ \chi \in {\rm{Irr}}(G\mid
G_{k}),$ this implies that all characters in $ {\rm{Irr}}(G\mid
G_{k})$ vanish on $G_{k-1}\setminus V_{k-1}$ as desired.
\end{proof}

Define $E_{i}/(G_{i-1}\cap Y_{i})=C_{G/{(G_{i-1}\cap
Y_{i})}}(G_{i-2}/(G_{i-1}\cap Y_{i})).$ We know that $V_{i-1}\le
G_{i-1}.$ Since $V_{i}=[V_{i-1},G],$ we have $V_{i-1}\le Y_{i},$ and
hence, $V_{i-1}\le G_{i-1} \cap Y_{i}.$ Because
$[G_{i-1},D_{i-1}]\le V_{i-1}\le G_{i-1} \cap Y_{i},$ it follows
that $D_{i-1} \le E_{i}.$

Recall, as a consequence of Lemma \ref{Hone}, that if $G_{i}$ is
$H_{1},$ then $V_{i-1}=G_{i-1}\cap Y_{i}.$ Hence,
$D_{i-1}/V_{i-1}=C_{G/V_{i-1}}(G_{i-2}/V_{i-1})=C_{G/(G_{i-1}\cap
Y_{i})}(G_{i-2}/(G_{i-1}\cap Y_{i}))=E_{i}/(G_{i-1}\cap Y_{i}).$ In
particular, $D_{i-1}=E_{i}.$

Notice that our next lemma is the only time we use the hypothesis
$G'/V_{i}$ is abelian.
\begin{lemma}\label{fivee}
Let $V_{i} < G_{i},$ suppose that $i\ge 4,$ and assume that
$G'/V_{i}$ is abelian. Then $D_{i}\leq E_{i}$.
\end{lemma}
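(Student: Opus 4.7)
The plan is to unwind the definitions and then apply the three subgroups lemma (working modulo $V_i$) to reduce the statement to two commutator containments that follow directly from the hypotheses.

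First I would fix $g \in D_i$ and translate the desired conclusion $g \in E_i$ into the commutator statement $[g, G_{i-2}] \le G_{i-1} \cap Y_i$. The containment $[g, G_{i-2}] \le G_{i-1}$ is automatic since $g \in G$ and $[G, G_{i-2}] = G_{i-1}$, so the real content is to show $[g, G_{i-2}] \le Y_i$; by definition of $Y_i$ this is equivalent to
\[
[[g, G_{i-2}], G] \le V_i.
\]

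Next I would pass to $\overline{G} = G/V_i$ and verify this last inclusion by the three subgroups lemma with the triple $A = \langle g \rangle$, $B = G_{i-2}$, $C = G$. The lemma requires the two containments $[[B,C],A] \le V_i$ and $[[C,A],B] \le V_i$. The first is $[[G_{i-2}, G], g] = [G_{i-1}, g] \le V_i$, which is exactly the statement $g \in D_i$. For the second we want $[[G, g], G_{i-2}] \le V_i$. Here both hypotheses enter: because $i \ge 4$ we have $i - 2 \ge 2$, so $G_{i-2} \le G_2 = G'$, while $[G, g] \le G'$ always, so
\[
[[G, g], G_{i-2}] \le [G', G'].
\]
The assumption that $G'/V_i$ is abelian gives $[G', G'] \le V_i$, finishing this verification.

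The two hypotheses are then both genuinely used and in visibly different places: the hypothesis $g \in D_i$ handles one of the three subgroups lemma inputs, while $G'/V_i$ abelian together with $i \ge 4$ (via $G_{i-2} \le G'$) handles the other. I do not expect any genuine obstacle here; the only bookkeeping point to be careful about is that the three subgroups lemma should be invoked in the quotient $G/V_i$ (or equivalently stated with everything $\bmod V_i$), since the identity $[[A,B],C] = 1$ style statement requires a group, not just a containment, to be rigorously applied.
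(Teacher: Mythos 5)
Your proposal is correct and is essentially the paper's own argument: both reduce modulo $V_i$, apply the Three Subgroups Lemma to the triple $(D_i, G_{i-2}, G)$ (you use $\langle g\rangle$ for $g \in D_i$ in place of $D_i$, which is immaterial), and feed it exactly the same two inputs, namely $[G_{i-1}, D_i] \le V_i$ from the definition of $D_i$ and $[[G, D_i], G_{i-2}] \le [G', G'] \le V_i$ from $i \ge 4$ and $G'/V_i$ abelian. Your closing remark about invoking the lemma in the quotient $G/V_i$ matches the paper's ``we may assume $V_i = 1$'' reduction.
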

\begin{proof}
We may  assume that $V_{i} =1.$ Hence, $ D_{i}= C_{G}(G_{i-1})$,
$G'$ is abelian, and $Y_{i}=Z(G).$ Since $ G'$ is abelian, we obtain
$[G, D_{i}, G_{i-2} ] \leq  [ G', G'] = 1$. On the other hand, we
have $[G_{i-2}, G, D_{i} ]=[ G_{i-1},D_{i}] =1$. By the Three
Subgroups Lemma, which is Lemma 8.27 in \cite{Isaacs2}, we get $
[D_{i}, G_{i-2}, G ]=1.$ Therefore, $[D_{i}, G_{i-2}] \leq Y_{i}$.
Now, we know that $[D_{i}, G_{i-2}]= [G_{i-2},D_{i}] \leq G_{i-1},$
and $[ D_{i},G_{i-2}] \leq G_{i-1}\cap Y_{i}$. We conclude that
$D_{i}\leq E_{i},$ as desired.
\end{proof}
In the next lemma, we get an upper bound for the index of $D_{i}$ in
$G.$
\begin{lemma}\label{seventeenprimee}
Assume that $V_{i} =1.$ If $|G_{i}|=p,$ then $|G:D_{i}|\le
|G_{i-1}:G_{i-1}\cap Y_{i}|$.
\end{lemma}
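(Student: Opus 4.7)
The plan is to build a non-degenerate commutator pairing with values in $G_{i}$ and extract the desired bound from a $\mathrm{Hom}$ computation.

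First, I would unwind the hypothesis $V_{i}=1$. The inclusion $G_{i+1}\le V_{i}=1$ forces $G_{i+1}=1$, so $G_{i}\le Z(G)$, which together with $V_{i}=1$ gives $Y_{i}=Z(G)$ and $D_{i}=C_{G}(G_{i-1})$. Since $[G_{i-1},G_{i-1}]\le G_{2i-2}\le G_{i}\le Y_{i}$, the quotient $A:=G_{i-1}/(G_{i-1}\cap Y_{i})$ is abelian.

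Next, I would introduce the commutator map $\beta(x,g)=[x,g]$. For $x\in G_{i-1}$ and $g\in G$, we have $[x,g]\in[G_{i-1},G]=G_{i}$. Because $G_{i}\le Z(G)$, the standard identities
$$[x_{1}x_{2},g]=[x_{1},g]^{x_{2}}[x_{2},g]=[x_{1},g][x_{2},g], \qquad [x,g_{1}g_{2}]=[x,g_{2}][x,g_{1}]^{g_{2}}=[x,g_{1}][x,g_{2}]$$
show that $\beta$ is bimultiplicative, since the conjugating exponents act trivially on elements of $G_{i}\le Z(G)$. An element $x\in G_{i-1}\cap Y_{i}$ is central in $G$, and an element $g\in D_{i}$ centralizes $G_{i-1}$, so $\beta$ descends to a pairing
$$\bar\beta:A\times (G/D_{i})\to G_{i},$$
which is non-degenerate in the second slot by the very definition of $D_{i}$.

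Finally, non-degeneracy on the right yields an injection $G/D_{i}\hookrightarrow\mathrm{Hom}(A,G_{i})$. Since $G_{i}$ is cyclic of order $p$ and $A$ is a finite abelian group, the structure theorem gives $|\mathrm{Hom}(A,G_{i})|=|A/pA|\le|A|$ (each cyclic factor $\mathbb{Z}/n$ of $A$ contributes $\gcd(n,p)\le n$). Therefore $|G:D_{i}|\le|A|=|G_{i-1}:G_{i-1}\cap Y_{i}|$, as desired. There is no serious obstacle here; the only delicate point is the bimultiplicativity of $\beta$, which works precisely because $G_{i}$ sits in the center, collapsing the conjugation twists in the commutator identities.
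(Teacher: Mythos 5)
Your argument is correct, but it takes a genuinely different route from the paper's. The paper's proof invokes Lewis's Theorem 1 to see that $G_{i-1}/V_{i-1}$ is elementary abelian, chooses $x_{1},\dots,x_{t}\in G_{i-1}\setminus Y_{i}$ generating $G_{i-1}$ together with $G_{i-1}\cap Y_{i}$, so that $|G_{i-1}:G_{i-1}\cap Y_{i}|=p^{t}$, applies Lemma \ref{twoone} (nilpotency plus $|G_{i}|=p$ forces $cl(x_{j})=x_{j}G_{i}$, hence $|G:C_{G}(x_{j})|=p$), and then bounds $|G:D_{i}|=|G:\bigcap_{j}C_{G}(x_{j})|\le\prod_{j}|G:C_{G}(x_{j})|=p^{t}$. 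You instead package the same commutator information into a bimultiplicative pairing $A\times(G/D_{i})\to G_{i}$ that is non-degenerate on the right, and conclude via the embedding $G/D_{i}\hookrightarrow\mathrm{Hom}(A,G_{i})$ together with $|\mathrm{Hom}(A,\mathbb{Z}/p)|=|A/pA|\le|A|$. All the steps check out: $G_{i+1}\le V_{i}=1$ does give $G_{i}\le Z(G)$, centrality of $G_{i}$ kills the conjugation twists in the commutator identities, and the pairing visibly factors through $A$ on the left and has kernel exactly $D_{i}$ on the right. What your route buys is economy of hypotheses: you never need $G_{i-1}/V_{i-1}$ to be elementary abelian, nor the nilpotent decomposition underlying Lemma \ref{twoone}; everything follows from $V_{i}=1$ and $|G_{i}|=p$ alone. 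The paper's version, by contrast, is a direct class-size count that leans on the surrounding machinery already established by Lewis. Both yield the same bound $|G:D_{i}|\le|G_{i-1}:G_{i-1}\cap Y_{i}|$.
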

\begin{proof}
By Theorem 1 in \cite{Lewis1}, we know that $G_{i-1}/V_{i-1}$ is an
elementary abelian $p$-group. Hence, we can find $x_{1}, \cdots
,x_{t} \in G_{i-1}\setminus Y_{i}$, such that $G_{i-1} =\langle
x_{1}, \cdots ,x_{t}, G_{i-1}\cap Y_{i} \rangle$, where $ |G_{i-1} :
G_{i-1}\cap Y_{i}| =p^{t}$. Since $|G_{i}|=p$, we know by Lemma
\ref{twoone} that $|G:C_{G}(x_{j})|=p$ for all $j= 1, \cdots , t$.
Thus,
$$|G:D_{i}|=|G:\bigcap_{j=1}^{t} C_{G}(x_{j})|\le \prod_{j=1}^{t}|G:C_{G}(x_{j})|=p^{t} = |G_{i-1} : G_{i-1}\cap Y_{i}|.$$

\end{proof}

In our next lemma, we prove a very interesting isomorphism that will
a be a key to get the index of $V_{i}$ in $G_{i}.$
\begin{lemma}\label{seventeeen}
Assume that 
$ G_{i}$ is $H_{1}.$ Let $a \in G_{i-1} \setminus V_{i-1}$ and set
$K/V_{i}=C_{G/V_{i}}(aV_{i}).$ Then $ G/K \cong G_{i}/V_{i}$.
\end{lemma}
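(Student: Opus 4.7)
The plan is to pass to the quotient $\bar{G}=G/V_{i}$ so that the problem reduces to showing $\bar{G}/\bar{K}\cong \bar{G}_{i}$, and then exhibit an explicit isomorphism via a commutator map. First I would verify that the relevant hypotheses survive the quotient. By Lemma \ref{twothree} with $N=V_{i}$ (legitimate in the non-trivial case $V_{i}<G_{i}$) we have $\bar{V}_{j}=V_{j}/V_{i}$ for $2\le j\le i$; in particular $\bar{V}_{i}=1$ and $\bar{V}_{i-1}=V_{i-1}/V_{i}$, so $\bar{a}:=aV_{i}\in \bar{G}_{i-1}\setminus\bar{V}_{i-1}$ and $\bar{K}:=K/V_{i}=C_{\bar{G}}(\bar{a})$. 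The $H_{1}$ hypothesis for $G_{i}$ transfers to $\bar{G}_{i}$ because each normal subgroup of $\bar{G}$ sandwiched between $\bar{V}_{i}$ and $\bar{G}_{i}$ lifts to a normal subgroup $N$ of $G$ with $V_{i}\le N<G_{i}$. Finally, Lewis's inclusion $V_{j}\le G_{j}\le V_{j-1}$ applied at index $j=i+1$ yields $G_{i+1}\le V_{i}$, so $\bar{G}_{i}=G_{i}/V_{i}$ is central in $\bar{G}$.

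Next, I would define $\phi:\bar{G}\to\bar{G}_{i}$ by $\phi(\bar{g})=[\bar{a},\bar{g}]$. Since $[\bar{a},\bar{g}]\in [\bar{G}_{i-1},\bar{G}]=\bar{G}_{i}$, the map indeed lands in $\bar{G}_{i}$. The standard identity $[\bar{a},\bar{g}\bar{h}]=[\bar{a},\bar{h}]\cdot [\bar{a},\bar{g}]^{\bar{h}}$, combined with the centrality of $\bar{G}_{i}$ (which eliminates the conjugation), shows that $\phi$ is a homomorphism into the abelian group $\bar{G}_{i}$, whose kernel is plainly $C_{\bar{G}}(\bar{a})=\bar{K}$.

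To finish, I must show $\phi$ is surjective, i.e., $[\bar{a},\bar{G}]=\bar{G}_{i}$. Here the $H_{1}$ hypothesis enters: applying Lemma \ref{twotwo} to $\bar{G}$ (whose hypotheses $\bar{V}_{i}=1$ and $\bar{G}_{i}$ is $H_{1}$ were established above) gives $cl_{\bar{G}}(\bar{a})=\bar{a}\bar{G}_{i}$. Rewriting the left side as $\{\bar{a}^{-1}\bar{a}^{\bar{g}}:\bar{g}\in\bar{G}\}$, this is exactly the statement $[\bar{a},\bar{G}]=\bar{G}_{i}$. The First Isomorphism Theorem then yields $\bar{G}/\bar{K}\cong \bar{G}_{i}$, i.e., $G/K\cong G_{i}/V_{i}$.

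I expect the main obstacle to be purely organizational rather than substantive: the decisive input (Lemma \ref{twotwo}) is already available, so the real work lies in carefully checking that the $H_{1}$ property and the other hypotheses descend to $G/V_{i}$, and in recognizing that once $\bar{G}_{i}$ is central in $\bar{G}$, the commutator-with-$\bar{a}$ map becomes a genuine homomorphism whose kernel and image are precisely what the statement requires.
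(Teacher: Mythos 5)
Your proposal is correct and follows essentially the same route as the paper: reduce to $V_{i}=1$, observe that $G_{i}$ is then central so that $g\mapsto [g,a]$ is a homomorphism with kernel $K$, get surjectivity from Lemma \ref{twotwo}, and apply the First Isomorphism Theorem. The only difference is that you spell out the justification of the reduction to $G/V_{i}$ (via Lemma \ref{twothree} and the transfer of the $H_{1}$ hypothesis), which the paper compresses into ``without loss of generality.''
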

\begin{proof}
Without loss of generality, we may assume that $V_{i}=1$. Consider
the map from $G$ to $ G_{i}$ defined by $ g \mapsto [g, a]$. Since
$a \in G_{i-1}$, we have $[g, a] \in G_{i} $ for every $ g\in G.$
Hence, this map is well defined. Also, we know that $ G_{i}$ is
central in $G.$ Thus, this map is a homomorphism with kernel $K$. By
Lemma \ref{twotwo}, this map is onto. Therefore, by the First
Isomorphism Theorem, we conclude that $ G/K \cong G_{i}.$
\end{proof}

Now, we prove the following result.

\begin{cor}\label{seventeencomposite}
Assume that $ G_{i}$ is $H_{1}.$ Then $|G_{i}:V_{i}|\le |G:D_{i}|.$
\end{cor}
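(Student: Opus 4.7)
The plan is to deduce this immediately from Lemma \ref{seventeeen} by observing that $D_i$ is contained in every one of the centralizers appearing in that lemma.

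First I would dispose of the trivial case. If $V_{i-1}=G_{i-1}$, then $V_i=[V_{i-1},G]=[G_{i-1},G]=G_i$, so $|G_i:V_i|=1$ and there is nothing to prove. So assume $V_{i-1}<G_{i-1}$ and pick any $a\in G_{i-1}\setminus V_{i-1}$.

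Next, I would apply Lemma \ref{seventeeen} to this $a$. Setting $K/V_i=C_{G/V_i}(aV_i)$, that lemma (which uses the hypothesis that $G_i$ is $H_1$) gives an isomorphism $G/K\cong G_i/V_i$, so in particular $|G:K|=|G_i:V_i|$.

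The final step is to note that $D_i\le K$. Indeed, by definition $D_i/V_i=C_{G/V_i}(G_{i-1}/V_i)$ centralizes every element of $G_{i-1}/V_i$ modulo $V_i$, and in particular it centralizes $aV_i$; so $D_i/V_i\le C_{G/V_i}(aV_i)=K/V_i$, i.e.\ $D_i\le K$. Consequently
\[
|G_i:V_i|=|G:K|\le |G:D_i|,
\]
as desired. There is no real obstacle here — the only thing to check carefully is that the hypothesis of Lemma \ref{seventeeen} is met (which is exactly the assumption that $G_i$ is $H_1$) and that an element $a\in G_{i-1}\setminus V_{i-1}$ actually exists, which is handled by the trivial case reduction above.
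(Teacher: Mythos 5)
Your proof is correct and follows exactly the paper's own argument: apply Lemma \ref{seventeeen} to some $a\in G_{i-1}\setminus V_{i-1}$ and observe that $D_i\le K$ because $D_i/V_i$ centralizes all of $G_{i-1}/V_i$. Your extra remark disposing of the case $V_{i-1}=G_{i-1}$ is a small point of care the paper omits, but it does not change the substance of the argument.
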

\begin{proof}
Let $a$ and $K$ be as in Lemma \ref{seventeeen}. We know since $a\in
G_{i-1}$ and $D_{i}/V_{i}=C_{G/V_{i}}(G_{i-1}/V_{i})$ that $D_{i}\le
K.$ Hence, $|G_{i}:V_{i}|=|G:K|\le |G:D_{i}|.$
\end{proof}

The following result is very useful to prove our main theorem.

\begin{lemma}\label{twentyy}
Assume that $V_{i}< G_{i}$, $G'/V_{i}$ is abelian, and $G_{i-1}$ is
$H_{1},$ for $i\ge 4.$ Let $a \in G_{i-2} \setminus V_{i-2}$ and set
$K/V_{i-1}=C_{G/V_{i-1}}(aV_{i-1}).$ Then $ K \leq D_{i}$.
\end{lemma}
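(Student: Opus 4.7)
The statement $K \le D_i$ unwinds, by definition of $D_i/V_i = C_{G/V_i}(G_{i-1}/V_i)$, to the single containment $[K, G_{i-1}] \le V_i$. My approach is to prove this in two moves: first, describe $G_{i-1}$ modulo $V_{i-1}$ using only commutators with the single element $a$; then dispose of those commutators one at a time via the Hall--Witt identity.

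For the first move, I would apply Lemma~\ref{seventeeen} at index $i-1$. The hypotheses are met: $G_{i-1}$ is $H_1$ by assumption, and $a \in G_{i-2} \setminus V_{i-2}$. The lemma supplies a surjective homomorphism $g \mapsto [g,a]\,V_{i-1}$ from $G$ onto $G_{i-1}/V_{i-1}$, so every $y \in G_{i-1}$ has the form $y = [g,a]\,v$ with $g \in G$ and $v \in V_{i-1}$. Using the commutator identity $[k, [g,a]v] = [k,v]\,[k,[g,a]]^{v}$ together with $[K,V_{i-1}] \le [G, V_{i-1}] = V_i$, it therefore suffices to show that $[k,[g,a]] \in V_i$ for every $k \in K$ and $g \in G$.

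For the second move, I would write down the Hall--Witt identity applied to $(k, a, g)$:
\[
[[k,a^{-1}],g]^{a}\cdot [[a,g^{-1}],k]^{g}\cdot [[g,k^{-1}],a]^{k} \;=\; 1.
\]
The definition of $K$ gives $[k,a^{-1}] \in V_{i-1}$, so $[[k,a^{-1}],g] \in [V_{i-1},G] = V_i$, killing the first factor modulo $V_i$. For the third factor, since $i \ge 4$ we have $a \in G_{i-2} \le G_2 = G'$, and $[g,k^{-1}] \in G'$; then the hypothesis that $G'/V_i$ is abelian yields $[[g,k^{-1}],a] \in [G',G'] \le V_i$. With two factors trivial modulo $V_i$ and $V_i$ normal in $G$, the remaining factor gives $[k,[a,g^{-1}]] \in V_i$, and since $g^{-1}$ ranges over $G$ as $g$ does, this is exactly $[k,[g,a]] \in V_i$ for all $g \in G$ and $k \in K$.

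The only delicate point is the Hall--Witt bookkeeping and recognizing that both hypotheses are needed to annihilate the two outer factors: the centralizer property of $K$ (together with $V_{i-1}$ being central modulo $V_i$) handles the first factor, while the metabelian-type condition $[G',G'] \le V_i$ combined with $i \ge 4$ (which puts $a$ inside $G'$) handles the third. Everything else is formal, and chaining the two moves via $[k,[g,a]v] = [k,v][k,[g,a]]^v$ delivers $[K, G_{i-1}] \le V_i$.
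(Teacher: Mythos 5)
Your proof is correct and is essentially the paper's own argument: the paper also applies Hall's identity to the triple $(a,x,w)$ with $x\in K$, kills one term via $[a,x]\in V_{i-1}$ and another via $a\in G'$ together with $[G',G']\le V_i$, and then uses the surjectivity of $w\mapsto[w,a]$ onto $G_{i-1}$ modulo $V_{i-1}$ (Lemma~\ref{twotwo}) to conclude $K\le D_i$. The only difference is cosmetic: the paper first reduces to $V_i=1$, while you carry $V_i$ through the computation.
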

\begin{proof}
We may assume that $V_{i}=1.$ Hence, $V_{i-1}$ is central in $G$,
$G'$ is abelian, $Y_{i}= Z(G),$ and $D_{i}=C_{G}(G_{i-1})$. Fix
$x\in K,$ and let $w\in G$ be arbitrary. Notice that $[a, x] \in
V_{i-1} \leq Y_{i}$. Thus, $[a , x, w]=1.$ Also, $[x,w]\in G'.$
Because $i\ge 4$, $G_{i-2}\le G'$ so $a\in G'$. Since $G'$ is
abelian, $[x, w, a ]\le [G',G']=1$. Therefore, by Hall's Identity,
which is Lemma 8.26 in \cite{Isaacs2}, we obtain $[w, a , x]=1$.
This implies that $x$ centralizes $[w,a].$ Since $a \not\in V_{i-2}$
and $G_{i-1}$ is $H_{1},$ we deduce by Lemma \ref{twotwo} that as
$w$ runs through all of $G$, $[w,a]$ runs through all of $G_{i-1}$.
Hence, $x$ centralizes $G_{i-1}.$ Thus, $ x\leq D_{i}.$ Therefore, $
K\leq D_{i}.$
\end{proof}

As a consequence of the previous lemma, we get the following
corollary.

\begin{cor}\label{corone}
Assume that $V_{i}< G_{i}$, $G'/V_{i}$ is abelian, and $G_{i-1}$ is
$H_{1},$ for $i\ge 4.$ Then $D_{i-1}\le D_{i}.$
\end{cor}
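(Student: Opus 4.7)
The plan is to derive the inclusion $D_{i-1} \leq D_i$ as an essentially one-line consequence of Lemma \ref{twentyy}, using the fact that $D_{i-1}$ by definition centralizes $G_{i-2}$ modulo $V_{i-1}$, so it is contained in the pointwise centralizer of any particular coset $aV_{i-1}$.

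First I would verify that there is something to pick: we need an element $a \in G_{i-2}\setminus V_{i-2}$. Since the hypothesis $V_i < G_i$ forces $V_j < G_j$ for every $1 \leq j \leq i$ by Lewis's result cited in the introduction, and $i-2 \geq 2$, the set $G_{i-2}\setminus V_{i-2}$ is nonempty. Fix such an $a$ and put $K/V_{i-1} = C_{G/V_{i-1}}(aV_{i-1})$.

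Next, observe that from the definition $D_{i-1}/V_{i-1} = C_{G/V_{i-1}}(G_{i-2}/V_{i-1})$, every element of $D_{i-1}$ centralizes $G_{i-2}$ modulo $V_{i-1}$, and in particular centralizes the single coset $aV_{i-1}$. Therefore $D_{i-1} \leq K$.

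Finally, Lemma \ref{twentyy} applies directly under the standing hypotheses (namely $V_i < G_i$, $G'/V_i$ abelian, $G_{i-1}$ is $H_1$, and $i \geq 4$), and yields $K \leq D_i$. Chaining the two inclusions gives $D_{i-1} \leq K \leq D_i$, as required. I do not anticipate any obstacle here: the content of the corollary is entirely packaged inside Lemma \ref{twentyy}, and all that remains is to recognize that the hypothesis defining $D_{i-1}$ places it inside the relevant centralizer $K$.
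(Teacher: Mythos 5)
Your proposal is correct and is essentially identical to the paper's own proof: both fix $a \in G_{i-2}\setminus V_{i-2}$, observe that $D_{i-1} \leq K$ where $K/V_{i-1} = C_{G/V_{i-1}}(aV_{i-1})$, and then invoke Lemma \ref{twentyy} to get $K \leq D_{i}$. Your explicit check that $G_{i-2}\setminus V_{i-2}$ is nonempty is a small extra care the paper omits, but it does not change the argument.
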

\begin{proof}
Let $a \in G_{i-2} \setminus V_{i-2}$ and set $
K/V_{i-1}=C_{G/V_{i-1}}(aV_{i-1}).$ Then by Lemma \ref{twentyy} we
have $ K \leq D_{i}.$ Also, we know that $D_{i-1}\le K.$ Thus,
$D_{i-1}\le D_{i}.$
\end{proof}
 We now get an upper bound for $|G_{i-1}:G_{i-1}\cap Y_{i}|.$
\begin{lemma}\label{fifty}
Assume that $V_{i} <G_{i}$ and $G_{i-1}$ is $H_{1}$. Then
$|G:E_{i}|\ge |G_{i-1}:G_{i-1}\cap Y_{i}|$.
\end{lemma}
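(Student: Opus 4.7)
The plan is to invoke Lemma~\ref{seventeeen} at the shifted index $i-1$ to produce an explicit subgroup of $G$ containing $E_i$ with the desired index. First, since $V_i < G_i$ implies $V_j < G_j$ for every $1 \le j \le i$ (Lewis's result cited in the introduction), I may pick some element $a \in G_{i-2} \setminus V_{i-2}$. Using the hypothesis that $G_{i-1}$ is $H_1$, I would then apply Lemma~\ref{seventeeen} with $i$ replaced by $i-1$ to this $a$, obtaining an isomorphism $G/K_a \cong G_{i-1}/V_{i-1}$, where $K_a/V_{i-1} = C_{G/V_{i-1}}(aV_{i-1})$ and the isomorphism is realized by $gK_a \mapsto [g,a]V_{i-1}$.

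The next step is to identify the preimage in $G$ of the subgroup $(G_{i-1} \cap Y_i)/V_{i-1}$ of $G_{i-1}/V_{i-1}$ under this isomorphism. Concretely, this preimage is $L_a := \{g \in G : [g,a] \in G_{i-1} \cap Y_i\}$, and for it to be a well-defined subgroup one needs $V_{i-1} \le G_{i-1} \cap Y_i$. This is immediate, since $V_{i-1} \le G_{i-1}$ by Lewis's inclusion $V_{i-1} \le G_{i-1}$, and $V_{i-1} \le Y_i$ because $[V_{i-1}, G] = V_i$ forces $V_{i-1}/V_i \le Z(G/V_i) = Y_i/V_i$. Thus $L_a$ is a subgroup of $G$ with $K_a \le L_a$, and $G/L_a \cong G_{i-1}/(G_{i-1} \cap Y_i)$, so $|G:L_a| = |G_{i-1}:G_{i-1}\cap Y_i|$.

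Finally, I would observe that $E_i \le L_a$, simply because by the definition of $E_i$ every $g \in E_i$ satisfies $[g,b] \in G_{i-1}\cap Y_i$ for every $b \in G_{i-2}$, and in particular this holds for $b = a$. Combining these steps yields $|G:E_i| \ge |G:L_a| = |G_{i-1}:G_{i-1}\cap Y_i|$, which is the desired bound. The essentially new content is concentrated in the application of Lemma~\ref{seventeeen} at the shifted index, so I do not anticipate a serious obstacle beyond verifying that re-indexing is legal and recording the standard containment $V_{i-1} \le G_{i-1}\cap Y_i$.
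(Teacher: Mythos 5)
Your proposal is correct and follows essentially the same route as the paper: both arguments use the surjective homomorphism $g \mapsto [g,a]V_{i-1}$ from Lemma~\ref{seventeeen} applied at index $i-1$ (legitimate since $G_{i-1}$ is $H_1$ and $G_i \le V_{i-1}$ makes $G_{i-1}/V_{i-1}$ central), together with the observation that $E_i$ maps into $(G_{i-1}\cap Y_i)/V_{i-1}$. The only cosmetic difference is that you pass to the preimage subgroup $L_a$ and count cosets there, whereas the paper bounds $|G:E_i|$ directly by the index of the image $f(E_i)$ in $G_{i-1}$.
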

\begin{proof}
Fix $a\in G_{i-2}\setminus V_{i-2},$ and consider the map $f$ from
$G$ to $G_{i-1}/V_{i-1}$ defined by $f(g)=[a,g]V_{i-1}.$ As in the
proof of Lemma \ref{seventeeen}, we know that $f$ is an onto
homomorphism. It follows that $f$ maps $G/E_{i}$ onto
$G_{i-1}/f(E_{i}).$ Thus, $|G_{i-1}:f(E_{i})| \leq |G:E_{i}|$. Since
$a\in G_{i-2},$ $[E_{i},a]\le G_{i-1} \cap Y_{i},$ and thus
$f(E_{i})\leq G_{i-1}\cap Y_{i}$. Then $|G_{i-1}:G_{i-1}\cap Y_{i}|
\leq |G_{i-1}:f(E_{i})|$. Hence, $|G:E_{i}|\ge |G_{i-1}:G_{i-1}\cap
Y_{i}|$ as required.
\end{proof}

\section{Proofs of Theorems 1, 2, and 3}

In this section, we prove our three theorems using the general
lemmas that we proved in the previous section.

Now, we  prove Theorem 1.

\begin{proof}[Proof of Theorem 1]
We have $D_{3}=D_{3}.$ This is the initial case of induction. Assume
that the theorem is true for $i-1.$ We are going to prove it for
$i.$ By hypothesis, we know that $G_{i}$ is $H_{1},$ and by Lemma
\ref{Hone}, we have $V_{i-1}=G_{i-1}\cap Y_{i}.$ This implies $E_{i}
=D_{i-1}.$ By the inductive hypothesis we know that $D_{i-1}=D_{3},$
and so, $E_{i}=D_{3}.$ By Lemma \ref{fivee}, we obtain $D_{i}\le
E_{i}.$ Applying Corollary \ref{corone}, we conclude that
$D_{i-1}\le D_{i}.$ Thus, $D_{i-1}\le D_{i} \le E_{i} =D_{i-1}.$
Therefore, we deduce that $D_{i}=E_{i}=D_{i-1}=D_{3}.$

\end{proof}

Now, we are ready to prove our second theorem.

\begin{proof}[Proof of Theorem 2]

We are going to prove this theorem by induction. Notice that the
initial case of induction $(i=3)$ is done by Lewis in \cite{Lewis1}.
Now, assume that the theorem is true for $k= i-1 $. We are going to
prove it for $k=i$. Also in this proof, without loss of generality,
we may assume that $V_{i}=1$. We also know by the inductive
hypothesis that $D_{i-1}=D_{3}$ and $G_{i-1}$ is $H_{1}.$ Now, by
Lemma \ref{fivee} we have that $D_{i} \le E_{i}.$ By Corollary
\ref{corone}, we have $D_{i}\le D_{i-1}$. First we assume that
$|G_{i}|=p.$ Thus, we obtain
$$|G:D_{i}|\ge |G:D_{i-1}| \ge  |G_{i-1}: V_{i-1}| \ge |G_{i-1}: G_{i-1}\cap Y_{i}|.$$
But by Lemma \ref{seventeenprimee}, we have $ |G:D_{i}|\le |G_{i-1}:
G_{i-1}\cap Y_{i}|.$ Hence, we have equality throughout the above
inequality. Therefore, $V_{i-1}=G_{i-1} \cap Y_{i},$ and
$|G_{i-1}:V_{i-1}|=|G:D_{i-1}|.$

Now, assume that $|G_{i}| >p.$ Consider a normal subgroup $N$, such
that $V_{i}\le N < G_{i}$ and $|G_{i}:N| =p$. The above argument
shows that $V_{i-1}(G/N)=Y_{i}(G/N)\cap (G_{i-1}/N).$ Thus, $G_{i}$
satisfies $H_{1}.$ By strong induction we have $G_{4},\cdots
,G_{i-1}$ satisfy $H_{1}.$ Thus, we may apply Theorem 1 to see that
$D_{i}=D_{3}.$ First define $D_{iN}/N=C_{G/N}(G_{i-1}/N).$ Note that
$D_{i}\le D_{iN},$ and so $D_{iN}=D_{3}.$ The above argument yields
$|G:D_{3}|=|G:D_{i-1}|=|G_{i-1}:V_{i-1}|.$ To prove part (d), since
$G_{i}$ is $H_{1},$ by Corollary \ref{seventeencomposite} we obtain
$|G_{i}:V_{i}|\le |G:D_{i}|,$ as desired.

\end{proof}

Now, we prove Theorem 3, which is a conclusive proof of MacDonald's
conjecture in \cite{MacDonald1} about the order of $G_{3},$ in the
case when $G$ is a Camina group of nilpotence class $3.$

\begin{proof}[Proof of Theorem 3] Note that $V_{1}=G_{2}.$ Hence,
$V_{2}=G_{3}.$ We deduce that $V_{3}=G_{4}=1.$ Let $a\in
G_{2}\setminus V_{2}$ and set $K =C_{G}(a).$ Thus, by Lemma
\ref{seventeeen}, we have $G_{3}\cong G/K.$ But $D_{3} \le K.$ By
MacDonald in \cite{MacDonald1}, we know that $|G:D_{3}| =p^{n}.$
Thus,
$$|G_{3}|= |G:K| \le |G:D_{3}| =p^{n}.$$
\end{proof}


\begin{thebibliography}{99}







\bibitem{Isaacs1}    I.~M.~Isaacs, ``Character Theory of Finite Groups,''
                     Academic Press, San Diego, California, {1976}.



\bibitem{Isaacs2}    I.~M.~Isaacs, ``Algebra A Graduate Course,''
                   Academic Press, Pacific Grove, California, {1993}.


\bibitem{MacDonald1}  I.~D.~MacDonald, ``Some p-Groups of Frobenius and Extra-Special Type,''
                    {\it Israel Journal of Mathematics} {\bf 40}
                  (1981), 350-364.
\bibitem{Lewis1}    M.~L.~Lewis, ``The vanishing-off subgroup,''
                  {\it Journal of Algebra} {\bf 321}
                  (2009), 1313-1325.

\bibitem{Drak1}   R.~Dark and C.~M.~Scoppola, ``On Camina Groups of Prime Power Order,''
                    {\it Journal of Algebra} {\bf 181}
                  (1996), 787-802.

\end{thebibliography}
\end{document}